\begin{document}

\frenchspacing

\theoremstyle{plain}
\newtheorem{theorem}{Theorem}[section]
\newtheorem{lemma}[theorem]{Lemma}
\newtheorem{proposition}[theorem]{Proposition}
\newtheorem{corollary}[theorem]{Corollary}

\theoremstyle{definition}
\newtheorem{remark}[theorem]{Remark}
\newtheorem{definition}[theorem]{Definition}
\newtheorem{assumption}{Assumption}
\newtheorem*{assuL}{Assumption ($\mathbb{L}$)}
\newtheorem*{assuAC}{Assumption ($\mathbb{AC}$)}
\newtheorem*{assuEM}{Assumption ($\mathbb{EM}$)}
\renewcommand{\theequation}{\thesection.\arabic{equation}}
\numberwithin{equation}{section}

\renewcommand{\thetable}{\thesection.\arabic{table}}
\numberwithin{table}{section}

\renewcommand{\thefigure}{\thesection.\arabic{figure}}
\numberwithin{figure}{section}

\newcommand{\Law}{\ensuremath{\mathop{\mathrm{Law}}}}
\newcommand{\loc}{{\mathrm{loc}}}
\newcommand{\Log}{\ensuremath{\mathop{\mathcal{L}\mathrm{og}}}}

\let\SETMINUS\setminus
\renewcommand{\setminus}{\backslash}

\def\stackrelboth#1#2#3{\mathrel{\mathop{#2}\limits^{#1}_{#3}}}

\newcommand{\prozess}[1][L]{{\ensuremath{#1=(#1_t)_{0\le t\le T_*}}}\xspace}
\newcommand{\prazess}[1][L]{{\ensuremath{#1=(#1_t)_{0\le t\le T_*}}}\xspace}
\newcommand{\pt}[1][]{{\ensuremath{\P_{T_{#1}}}}\xspace}
\newcommand{\ts}[1][]{\ensuremath{T_{#1}}\xspace}
\def\P{\ensuremath{\mathrm{I\kern-.2em P}}}
\def\E{\mathrm{I\kern-.2em E}}

\def\bF{\mathbf{F}}
\def\F{\ensuremath{\mathcal{F}}}
\def\R{\ensuremath{\mathbb{R}}}
\def\C{\ensuremath{\mathbb{C}}}
\def\bt{\ensuremath{\mathbf{T}}}

\def\Rmz{\R\setminus\{0\}}
\def\Rdmz{\R^d\setminus\{0\}}
\def\Rnmz{\R^n\setminus\{0\}}

\def\Rp{\mathbb{R}_{\geqslant0}}

\def\lev{L\'{e}vy\xspace}
\def\lib{LIBOR\xspace}
\def\lk{L\'{e}vy--Khintchine\xspace}
\def\smmg{semimartingale\xspace}
\def\smmgs{semimartingales\xspace}
\def\mg{martingale\xspace}
\def\tih{time-inhomoge\-neous\xspace}

\def\eqlaw{\ensuremath{\stackrel{\mathrrefersm{d}}{=}}}

\def\ud{\ensuremath{\mathrm{d}}}
\def\dt{\ud t}
\def\ds{\ud s}
\def\dx{\ud x}
\def\dy{\ud y}
\def\dsdx{\ensuremath{(\ud s, \ud x)}}
\def\dtdx{\ensuremath{(\ud t, \ud x)}}

\def\intrr{\ensuremath{\int_{\R}}}

\def\MM{\ensuremath{\mathscr{M}}}
\def\ME{\mathbb{M}}

\def\EM{\ensuremath{(\mathbb{EM})}\xspace}
\def\ES{\ensuremath{(\mathbb{ES})}\xspace}
\def\AC{\ensuremath{(\mathbb{AC})}\xspace}
\def\LL{\ensuremath{(\mathbb{L})}\xspace}

\def\ott{{0\leq t\leq T_*}}

\def\e{\mathrm{e}}
\def\eps{\epsilon}

\def\half{\frac{1}{2}}
\def\LibT{L(t,T_i)}
\def\MeaT{\P_{T_{i+1}}}
\def\volT{\lambda(s,T_i)}
\def\vol2T{\lambda^2(s,T_i)}
\def\LevT{H_s^{T_{i+1}}}

\title[Taylor Approximation of SDE and L\'evy LIBOR model]
      {Strong Taylor approximation of Stochastic Differential Equations
       and application to the L\'evy LIBOR model}

\author[A. Papapantoleon]{Antonis Papapantoleon}
\author[M. Siopacha]{Maria Siopacha}

\address{Institute of Mathematics, TU Berlin, Stra\ss e des 17. Juni 136,
         10623 Berlin, Germany \& Quantitative Products Laboratory,
         Deutsche Bank AG, Alexanderstr. 5, 10178 Berlin, Germany}
\email{papapan@math.tu-berlin.de}

\address{Raiffeisen Zentralbank \"Osterreich AG, Market Risk,
         Am Stadtpark 9, 1030 Vienna, Austria}
\email{maria.siopacha@rzb.at}

\keywords{LIBOR models, stochastic differential equations, L\'evy processes,
          perturbation, Taylor approximation, caps, swaptions}
\subjclass[2000]{60H35, 65C30, 91B28, 58J37}

\thanks{We are grateful to Friedrich Hubalek for numerous discussions and
        assistance with computational issues.
        We also thank Ernst Eberlein, Josef Teichmann, Christoph Schwab and
        Christoph Winter for helpful discussions and comments.
        A.~P. gratefully acknowledges the financial support from the Austrian
        Science Fund (FWF grant Y328, START Prize)}

\date{}
\maketitle
\pagestyle{myheadings}

\begin{abstract}
In this article we develop a method for the strong approximation of stochastic
differential equations (SDEs) driven by L\'evy processes or general semimartingales.
The main ingredients of our method is the perturbation of the SDE and the Taylor
expansion of the resulting parameterized curve. We apply this method to develop
strong approximation schemes for LIBOR market models. In particular, we derive
fast and precise algorithms for the valuation of derivatives in LIBOR models
which are more tractable than the simulation of the full SDE. A numerical
example for the L\'evy LIBOR model illustrates our method.
\end{abstract}

\section{Introduction}
\label{intro}

The main aim of this paper is to develop a general method for the strong
approximation of stochastic differential equations (SDEs) and to apply it to the
valuation of options in \lib models. The method is based on the perturbation of
the initial SDE by a real parameter, and then on the Taylor expansion of
the resulting parameterized curve around zero. Thus, we follow the line of
thought of \citeN{SiopachaTeichmann07} and extend their results from continuous
to general \smmgs. The motivation for this work comes from \lib market models;
in particular, we consider the \lev \lib model as the basic paradigm for the
development of this method.

The \lib market model has become a standard model for the pricing of interest
rate derivatives in recent years. The main advantage of the LIBOR model in
comparison to other approaches is that the evolution of discretely compounded,
market-observable forward rates is modeled directly and not deduced from the
evolution of unobservable factors. Moreover, the log-normal \lib model is
consistent with the market practice of pricing caps according to Black's formula
(cf. \citeNP{Black76}). However, despite its apparent popularity, the \lib
market model has certain well-known pitfalls.

On the one hand, the log-normal \lib model is driven by a
Brownian motion, hence it cannot be calibrated adequately to the observed market
data. An interest rate model is typically calibrated to the implied volatility
surface from the cap market and the correlation structure of at-the-money
swaptions. Several extensions of the \lib model have been proposed in the
literature using jump-diffusions, \lev processes or general semimartingales as
the driving motion (cf. e.g. \citeNP{GlassermanKou03}, Eberlein and \"Ozkan
\citeyear{EberleinOezkan05}, \citeNP{Jamshidian99}), or incorporating
stochastic volatility effects (cf. e.g. \citeNP{AndersenBrothertonRatcliffe05}).

On the other hand, the dynamics of LIBOR rates are not tractable under
every forward measure due to the random terms that enter the dynamics
of \lib rates during the construction of the model. In particular, when the
driving process has continuous paths the dynamics of LIBOR rates are tractable
under their corresponding forward measure, but they are not tractable under any
other forward measure. When the driving process is a general semimartingale,
then the dynamics of \lib rates are not even tractable under their very own
forward measure. Consequently:
\begin{enumerate}
\item if the driving process is a \textit{continuous} \smmg caplets can be
      priced in closed form, but \textit{not} swaptions or other multi-LIBOR
      derivatives;
\item if the driving process  is a \textit{general} \smmg, then even caplets
      \textit{cannot} be priced in closed form.
\end{enumerate}
The standard remedy to this problem is the so-called ``frozen drift''
approximation, where one replaces the random terms in the dynamics of \lib rates
by their deterministic initial values; it was first proposed by
\shortciteN{BraceGatarekMusiela97} for the pricing of swaptions and has been
used by several authors ever since. \shortciteN{BraceDunBarton01},
\shortciteN{DunBartonSchloegl01} and \citeN{Schloegl02} argue that freezing the
drift is justified, since the deviation from the original equation is small in
several measures.

Although the frozen drift approximation is the simplest and most popular solution,
it is well-known that it does not yield acceptable results, especially
for exotic derivatives and longer horizons. Therefore, several other
approximations have been developed in the literature; in one line of research
\citeN{DanilukGatarek05} and \shortciteN{KurbanmuradovSabelfeldSchoenmakers} are
looking for the best lognormal approximation of the forward LIBOR dynamics; cf.
also \citeN{Schoenmakers05}. Other authors have been using linear interpolations
and predictor-corrector Monte Carlo methods to get a more accurate approximation
of the drift term (cf. e.g. \shortciteNP{PelsserPieterszRegenmortel},
\shortciteANP{HunterJaeckelJoshi01} \citeyearNP{HunterJaeckelJoshi01}
and \citeNP{GlassermanZhao00}). We refer the reader to Joshi and Stacey
\citeyear{JoshiStacey08} for a detailed overview of that literature, and
for some new approximation schemes and numerical experiments.

Although most of this literature focuses on the lognormal LIBOR market model,
\citeANP{GlassermanMerener03} (\citeyearNP{GlassermanMerener03},
\citeyearNP{GlassermanMerener03b}) have developed approximation schemes for the
pricing of caps and swaptions in jump-diffusion \lib market models.

In this article we develop a general method for the approximation of the random
terms that enter into the drift of LIBOR models. In particular, by perturbing
the SDE for the \lib rates and applying Taylor's theorem we develop a generic
approximation scheme; we concentrate here on the first order Taylor expansion,
although higher order expansions can be considered in the same framework. At the
same time, the frozen drift approximation can be embedded in this method as the
zero-order Taylor expansion, thus offering a theoretical justification for this
approximation. The method we develop yields more accurate results than the
frozen drift approximation, while being computationally simpler than the
simulation of the full SDE for the \lib rates. Moreover, our method is universal
and can be applied to any \lib model driven by a general \smmg. However, we focus
on the \lev \lib model as a characteristic example of a \lib model driven by a
general semimartingale.

The article is structured as follows: in section \ref{PIIAC} we review \tih
\lev process, and in section \ref{LevyLIBOR} we revisit the \lev \lib model. In
section \ref{dynamics} we describe the dynamics of log-\lib rates under the terminal
martingale measure and express them as a \lev-driven SDE. In section \ref{main}
we develop the strong Taylor approximation method and apply it to the \lev \lib
model. Finally, section \ref{numerics} contains a numerical illustration.

\section{L\'evy processes}
\label{PIIAC}

Let ($\Omega, \F, \bF, \P$) be a complete stochastic basis, where $\F=\F_{T_*}$
and the filtration $\bF=(\F_t)_{t\in[0,T_*]}$ satisfies the usual conditions;
we assume that $T_*\in\Rp$ is a finite time horizon. The driving process
\prozess[H] is a \emph{process} with \emph{independent increments} and
\emph{absolutely continuous} characteristics; this is also called a \emph{\tih
\lev process}. That is, $H$ is an adapted, c\`{a}dl\`{a}g, real-valued stochastic
process with independent increments, starting from zero, where the law of $H_t$,
$t\in[0,T_*]$, is described by the characteristic function
\begin{align}\label{LK}
\E\!\left[\e^{iuH_{t}}\right]
 = \exp\left(\int_{0}^{t}\Big[ ib_su - \frac{c_s}{2}u^{2}
  + \int_{\R}(\e^{iux}-1-iux)F_s(\ud x)\Big]\ud s\right);
\end{align}
here $b_t\in\R$, $c_t\in\Rp$ and $F_t$ is a \lev measure, i.e. satisfies $F_t(\{0\})=0$
and $\int_{\R}(1\wedge|x|^2)F_t(\ud x)<\infty$, for all $t\in[0,T_*]$. In addition,
the process $H$ satisfies Assumptions ($\mathbb{AC}$) and ($\mathbb{EM}$) given
below.

\begin{assuAC}
The triplets ($b_t,c_t,F_t$) satisfy
\begin{eqnarray}
\int_{0}^{T_*}\bigg( |b_t| + c_t
  + \int_{\R}(1\wedge|x|^2)F_t(\ud x) \bigg)\ud t <\infty.
\end{eqnarray}
\end{assuAC}

\begin{assuEM}
There exist constants $M, \varepsilon>0$ such that for every
$u\in[-(1+\varepsilon)M,(1+\varepsilon)M]=:\ME$
\begin{equation}\label{eq:Int}
    \int_0^{T_*}\int_{\{|x|>1\}}\e^{ux} F_t(\ud x)\ud t<\infty.
\end{equation}
Moreover, without loss of generality, we assume that
$\int_{\{|x|>1\}}\e^{ux}F_t(\ud x)<\infty$ for all $t\in[0,T_*]$ and $u\in\ME$.
\end{assuEM}

These assumptions render the process \prozess[H] a \emph{special}
semimartingale, therefore it has the canonical decomposition (cf.
Jacod and Shiryaev
\citeyearNP[II.2.38]{JacodShiryaev03}, and \shortciteNP{EberleinJacodRaible05})
\begin{align}\label{canonical}
 H = \int_{0}^{\cdot}b_s\ud s
       + \int_{0}^{\cdot}\sqrt{c_s} \ud W_{s}
       + \int_{0}^{\cdot}\int_{\R} x(\mu^{H}-\nu)(\ud s,\ud x),
\end{align}
where $\mu^H$ is the random measure of jumps of the process $H$, $\nu$ is the
$\P$-compensator of $\mu^H$, and \prazess[W] is a $\P$-standard Brownian motion.
The \emph{triplet of predictable characteristics} of $H$ with respect to the
measure $\P$, $\mathbb T(H|\P)=(B,C,\nu)$, is
\begin{eqnarray}\label{ch4:triplet}
 B = \int_{0}^{\cdot}b_s\ud s, \qquad
 C = \int_0^\cdot c_s\ud s, \qquad
 \nu([0,\cdot] \times A)=\int_0^\cdot\int_A F_s(\ud x)\ud s,
\end{eqnarray}
where $A\in\mathcal{B}(\R)$; the triplet ($b,c,F$) represents the \emph{local
characteristics} of $H$. In addition, the triplet of predictable characteristics
($B,C,\nu$) determines the distribution of $H$, as the \lk formula \eqref{LK}
obviously dictates.

We denote by $\kappa_s$ the \emph{cumulant generating function} associated to
the infinitely divisible distribution with \lev triplet ($b_s,c_s,F_s$), i.e.
for $z\in\ME$ and $s\in[0,T_*]$
\begin{align}\label{cumulant}
\kappa_s(z) := b_sz+\frac{c_s}{2}z^{2}
             + \int_{\R}(\e^{zx}-1-zx)F_s(\ud x).
\end{align}
Using Assumption \EM we can extend $\kappa_s$ to the complex domain $\C$, for
$z\in\C$ with $\Re z\in\ME$, and the characteristic function of $H_t$ can be
written as
\begin{align}\label{char-fun}
\E\!\left[\e^{iuH_{t}}\right] = \exp\bigg(\int_{0}^{t} \kappa_s(iu)\ud s\bigg).
\end{align}
If $H$ is a \lev process, i.e. time-homogeneous, then ($b_s,c_s,F_s$) -- and thus
also $\kappa_s$ -- do not depend on $s$. In that case, $\kappa$ equals the cumulant
(log-moment) generating function of $H_1$.

\section{The L\'evy LIBOR model}
\label{LevyLIBOR}

The \lev LIBOR model was developed by \citeN{EberleinOezkan05}, following the
seminal articles of \shortciteN{SandmannSondermannMiltersen95}, 
\shortciteANP{MiltersenSandmannSondermann97}
\citeyear{MiltersenSandmannSondermann97} and
\shortciteN{BraceGatarekMusiela97} on LIBOR market models driven by Brownian
motion; see also \citeN{GlassermanKou03} and \citeN{Jamshidian99} for LIBOR
models driven by jump processes and general semimartingales respectively. The
\lev \lib model is a \textit{market model} where the forward LIBOR rate is
modeled directly, and is driven by a \tih \lev process.

Let $0=T_0<T_{1}<\cdots<T_{N}<T_{N+1}=T_*$ denote a discrete tenor
structure where $\delta_i=T_{i+1}-T_{i}$, $i\in\{0,1,\dots,N\}$. Consider a
complete stochastic basis $(\Omega, \F,\mathbf{F},\P_{T_*})$ and a \tih \lev
process \prozess[H] satisfying Assumptions $(\mathbb{AC})$ and $(\mathbb{EM})$.
The process $H$ has predictable characteristics ($0,C,\nu^{T_*}$) or local
characteristics $(0,c,F^{T_*})$, and its canonical decomposition is
\begin{align}\label{canon-LIBOR}
H = \int_0^\cdot \sqrt{c_s}\ud W_s^{T_*}
  + \int_0^\cdot\int_{\R}x(\mu^H-\nu^{T_*})\dsdx,
\end{align}
where $W^{T_*}$ is a $\P_{T_*}$-standard Brownian motion, $\mu^H$ is the random
measure associated with the jumps of $H$ and $\nu^{T_*}$ is the
$\P_{T_*}$-compensator of $\mu^H$. We further assume that the following
conditions are in force.
\begin{description}
\item[(LR1)] For any maturity $T_{i}$ there exists a bounded, continuous,
             deterministic function $\lambda(\cdot,T_{i}):[0,T_i]\rightarrow \R$,
             which represents the volatility of the forward LIBOR rate process
             $L(\cdot, T_{i})$. Moreover,
\begin{align*}
\sum_{i=1}^N \big|\lambda(s,T_i)\big|\leq M,
\end{align*}
             for all $s\in[0,T_*]$, where $M$ is the constant from Assumption
             ($\mathbb{EM}$), and $\lambda(s,T_i)=0$ for all $s>T_i$.
\item[(LR2)] The initial term structure $B(0,T_i)$, $1\leq i\leq N+1$, is strictly
             positive and strictly decreasing. Consequently, the initial term
             structure of forward LIBOR rates is given, for $1\leq i\leq N$, by
\begin{align}\label{i-val}
L(0,T_i)=\frac{1}{\delta_i}\left(\frac{B(0,T_i)}{B(0,T_i+\delta_i)}-1\right)>0.
\end{align}
\end{description}

The construction starts by postulating that the dynamics of the forward LIBOR
rate with the longest maturity $L(\cdot,T_N)$ is driven by the \tih \lev process
$H$ and evolve as a martingale under the terminal forward measure $\P_{T_*}$.
Then, the dynamics of the LIBOR rates for the preceding maturities are
constructed by backward induction; they are driven by the same process $H$ and
evolve as martingales under their associated forward measures.

Let us denote by $\MeaT$ the forward measure associated to the settlement date
$T_{i+1}$, $i\in\{0,\dots,N\}$. The dynamics of the forward LIBOR rate
$L(\cdot,T_i)$, for an arbitrary $T_i$, is given by
\begin{align}\label{LIBOR-dyn}
\LibT = L(0,T_i)
 \exp\left(\int_0^t b^L(s,T_i)\ud s+\int_0^t \volT\ud\LevT\right),
\end{align}
where $H^{T_{i+1}}$ is a special \textit{semimartingale} with canonical
decomposition
\begin{align}\label{LIBOR-Levy}
H_t^{T_{i+1}}
      = \int_0^t \sqrt{c_s}\ud W_s^{T_{i+1}}
      + \int_0^t\int_{\R}x(\mu^H-\nu^{T_{i+1}})\dsdx.
\end{align}
Here $W^{T_{i+1}}$ is a $\P_{T_{i+1}}$-standard Brownian motion and
$\nu^{T_{i+1}}$ is the $\P_{T_{i+1}}$-compensator of $\mu^H$. The dynamics of
an arbitrary LIBOR rate again evolves as a martingale under its corresponding
forward measure; therefore, we specify the drift term of the forward LIBOR
process $L(\cdot,T_i)$ as
\begin{align}\label{LIBOR-drift-term}
b^L(s,T_i)
 & = -\half \vol2T c_s\nonumber\\
 & \quad\;
     - \int_{\R} \big(\e^{\volT x}-1-\volT x\big)F_s^{T_{i+1}}(\ud x).
\end{align}

The forward measure $\MeaT$, which is defined on
$(\Omega,\F,(\F_t)_{0\leq t\leq T_{i+1}})$, is related to the terminal forward
measure $\P_{T_*}$ via
\begin{align}\label{TF-libor}
\frac{\ud\MeaT}{\ud\P_{T_*}}
 = \prod_{l=i+1}^{N}\frac{1+\delta_l L(T_{i+1},T_l)}{1+\delta_l L(0,T_l)}
 = \frac{B(0,T_*)}{B(0,T_{i+1})}\prod_{l=i+1}^{N} \left(1+\delta_l L(T_{i+1},T_l)\right).
\end{align}
The $\MeaT$-Brownian motion $W^{T_{i+1}}$ is related to the $\P_{T_*}$-Brownian
motion via
\begin{align}\label{Levy-LIBOR-Brownian}
W_t^{T_{i+1}}
 & = W_t^{T_{i+2}} - \int_0^t \alpha(s,T_{i+1})\sqrt{c_s}\ud s
   = \dots\nonumber\\
 & = W_t^{T_*} - \int_0^t \left(\sum_{l=i+1}^{N}\alpha(s,T_l)\right)\sqrt{c_s} \ud s,
\end{align}
where
\begin{align}\label{Levy-LIBOR-alpha}
\alpha(t,T_l)
 = \frac{\delta_l L(t-,T_l)}{1+\delta_l L(t-,T_l)}\lambda(t,T_l).
\end{align}
The $\MeaT$-compensator of $\mu^H$, $\nu^{T_{i+1}}$, is related to the
$\P_{T_*}$-compensator of $\mu^H$ via
\begin{align}\label{Levy-LIBOR-compensator}
\nu^{T_{i+1}}\dsdx
 &= \beta(s,x,T_{i+1})\nu^{T_{i+2}}\dsdx
  = \dots\nonumber\\
 &= \left(\prod_{l=i+1}^{N}\beta(s,x,T_l)\right)\nu^{T_*}\dsdx,
\end{align}
where
\begin{align}\label{Levy-LIBOR-beta}
\beta(t,x,T_l,)
 = \frac{\delta_l L(t-,T_l)}{1+\delta_l L(t-,T_l)}\Big(\e^{\lambda(t,T_l)x}-1\Big) +1.
\end{align}

\begin{remark}\label{LIBOR-conn}
Notice that the process $H^{T_{i+1}}$, driving the forward LIBOR rate $L(\cdot,T_i)$,
and $H=H^{T_*}$ have the same \emph{martingale} part and differ only in the
\emph{finite variation} part (drift). An application of Girsanov's theorem for
semimartingales yields that the $\P_{T_{i+1}}$-finite variation part of $H$ is
\begin{align*}
\int_0^{\cdot} c_s\sum_{l=i+1}^{N}\alpha(s,T_l) \ud s
  + \int_0^{\cdot}\intrr x\left(\prod_{l=i+1}^{N}\beta(s,x,T_l)-1\right)\nu^{T_*}\dsdx.
\end{align*}
\end{remark}

\begin{remark}\label{non-PIIAC}
The process $H=H^{T_*}$ driving the most distant LIBOR rate $L(\cdot,T_N)$ is --
by assumption -- a time-inhomogeneous \lev process. However, this is \emph{not}
the case for any of the processes $H^{T_{i+1}}$ driving the remaining LIBOR
rates, because the \emph{random} terms $\frac{\delta_lL(t-,T_l)}{1+\delta_lL(t-,T_l)}$
enter into the compensators $\nu^{T_{i+1}}$ during the construction; see
equations \eqref{Levy-LIBOR-compensator} and \eqref{Levy-LIBOR-beta}.
\end{remark}

\section{Terminal measure dynamics and log-\lib rates}
\label{dynamics}

In this section we derive the stochastic differential equation that the
dynamics of log-LIBOR rates satisfy under the terminal measure $\P_{T_*}$. This
will be the starting point for the approximation method that will be developed
in the next section. Of course, we could consider the
SDE as the defining point for the model, as is often the case in stochastic
volatility LIBOR models, cf. e.g. \citeN{AndersenBrothertonRatcliffe05}.

Starting with the dynamics of the LIBOR rate $L(\cdot,\ts[i])$ under the forward
martingale measure \pt[i+1], and using the connection between the forward and
terminal martingale measures (cf. eqs. \eqref{Levy-LIBOR-Brownian}--\eqref{Levy-LIBOR-beta}
and Remark \ref{LIBOR-conn}), we have that the dynamics of the LIBOR rate $L(\cdot,\ts[i])$
under the terminal measure are given by
\begin{align}\label{LIBOR-dyn-PT}
\LibT = L(0,T_i)
 \exp\left( \int_0^t b(s,T_i)\ud s+\int_0^t \volT\ud H_s \right),
\end{align}
where \prozess[H] is the $\P_{T_*}$-\tih \lev process driving the LIBOR rates,
cf. \eqref{canon-LIBOR}. The drift term $b(\cdot,\ts[i])$ has the form
\begin{align}\label{LIBOR-drift-PT}
b(s,\ts[i])
 & = -\half \vol2T c_s
     - c_s\volT \sum_{l=i+1}^{N}\frac{\delta_l L(s-,T_l)}{1+\delta_l L(s-,T_l)}\lambda(s,T_l)
     \nonumber\\&\quad
     - \int_{\R}\left(\Big(\e^{\volT x}-1\Big)\prod_{l=i+1}^{N}\beta(s,x,T_l)
                      -\volT x\right)F_s^{T_*}(\ud x),
\end{align}
where $\beta(s,x,T_l)$ is given by \eqref{Levy-LIBOR-beta}. Note that the drift
term of \eqref{LIBOR-dyn-PT} is random, therefore we are dealing with a general
semimartingale, and not with a \lev process. Of course, $L(\cdot,\ts[i])$ is not
a $\P_{T_*}$-martingale, unless $i=N$ (where we use the conventions $\sum_{l=1}^0=0$
and $\prod_{l=1}^0=1$).

Let us denote by $Z$ the log-\lib rates, that is
\begin{align}\label{log-LIB-SIE}
Z(t,T_i)
 &:= \log L(t,T_i) \nonumber\\
 &= Z(0,T_i) + \int_0^t b(s,T_i)\ud s + \int_0^t \volT\ud H_s,
\end{align}
where $Z(0,T_i)=\log L(0,T_i)$ for all $i\in\{1,\dots,N\}$. We can immediately
deduce that $Z(\cdot,T_i)$ is a semimartingale and its triplet of predictable
characteristics under $\pt[*]$, $\mathbb{T}(Z(\cdot,T_i)|\P_{T_*})=(B^i,C^i,\nu^i)$,
is described by
\begin{align}
 B^i &= \int\nolimits_0^\cdot b(s,\ts[i])\ds \nonumber\\
 C^i &= \int\nolimits_0^\cdot \vol2T c_s\ds\\
 1_A(x)*\nu^i &= 1_A\big(\volT x\big)*\nu^{T_*}, \qquad A\in\mathcal B(\Rmz). \nonumber
\end{align}
The assertion follows from the canonical decomposition of a semimartingale and
the triplet of characteristics of the stochastic integral process; see, for
example, Proposition 1.3 in \citeN{Papapantoleon06}.

Hence, the log-\lib rates satisfy the following linear SDE
\begin{align}\label{log-LIB-SDE}
\ud Z(t,T_i) &= b(t,T_i)\dt + \lambda(t,T_i)\ud H_t,
\end{align}
with initial condition $Z(0,T_i)=\log L(0,T_i)$.

\begin{remark}
Note that the martingale part of $Z(\cdot,T_i)$, i.e. the stochastic integral
$\int_0^\cdot\volT\ud H_s$,
is a \tih \lev process. However, the random drift term destroys the \lev property
of $Z(\cdot,T_i)$, as the increments are no longer independent.
\end{remark}

\section{Strong Taylor approximation and applications}
\label{main}

The aim of this section is to \emph{strongly approximate} the stochastic differential
equations for the dynamics of \lib rates under the terminal measure. This pathwise
approximation is based on the strong Taylor approximation of the random processes
$L(\cdot,T_l)$, $i+1\le l\le N$ in the drift $b(\cdot,\ts[i])$ of the semimartingale
driving the LIBOR rates $L(\cdot,\ts[i])$; cf. equations
\eqref{LIBOR-dyn-PT}--\eqref{log-LIB-SIE}. The idea behind the strong Taylor
approximation is the perturbation of the initial SDE by a real parameter and a
classical Taylor expansion around this parameter, with usual conditions for
convergence (cf. Definition \ref{taylor}).

\subsection{Definition}

We introduce a parameter $\eps\in\R$ and will approximate the terms
\begin{align}\label{to-approx}
L(t-,T_l)
\end{align}
which cause the drift term to be random, by their \emph{first-order strong
Taylor approximation}; cf. Lemma \ref{strong-Taylor}. Note that the map
$x\mapsto\frac{\delta_l x}{1+\delta_l x}$, appearing in the drift, is globally
Lipschitz with Lipschitz constant $\delta^*=\max_l \delta_l$.

The following definition of the strong Taylor approximation is taken by
\citeN{Siopacha06}; see also \citeN{SiopachaTeichmann07}. Consider a smooth
curve $\epsilon\mapsto W_\epsilon$, where $\epsilon\in\R$ and
$W_\epsilon\in L^2(\Omega;\R)$.

\begin{definition}\label{taylor}
A \emph{strong Taylor approximation} of order $n\geq0$ is a (truncated) power
series
\begin{align}
\mathbf{T}^n(W_\epsilon)
 := \sum_{k=0}^n \frac{\epsilon^k}{k!}\frac{\partial^k}{\partial\epsilon^k}\Big|_{\epsilon=0}W_\epsilon
\end{align}
such that
\begin{align}
\E\left[|W_\epsilon-\mathbf{T}^n(W_\epsilon)|\right]=o(\epsilon^n),
\end{align}
holds true as $\epsilon\rightarrow0$.
\end{definition}

Then, for Lipschitz functions $f:\R\to\Rp$ with Lipschitz constant $k$ we get
the following error estimate:
\begin{align}\label{err-est}
\E[|f(W_\epsilon)-f(\mathbf{T}^n(W_\epsilon))|]
 \le k \E[|W_\epsilon-\mathbf{T}^n(W_\epsilon)|]
 = k o(\epsilon^n).
\end{align}

\begin{remark}
It is important to point out that motivated by the idea of the Taylor series we
perform an expansion around $\eps=0$ and the estimate \eqref{err-est} is valid.
However, for the pathwise approximation of \lib rates we are interested in the
region $\eps\approx1$, and hope that the expansion yields adequate results; for
$\eps=0$ we would simply recover the ``frozen drift'' approximation. Numerical
experiments show that this approach indeed yields better results than the
``frozen drift'' approximation; cf. section \ref{numerics}.
\end{remark}

\subsection{Strong Taylor approximation}

In this section we develop a strong Taylor approximation scheme for the dynamics
of log-\lib rates.

Let us introduce the auxiliary process
$X^{\eps}(\cdot,\ts[i])=(X^{\eps}(t,\ts[i]))_{0\le t\le T_i}$ with initial values
$X^{\eps}(0,\ts[i])=L(0,\ts[i])$ for all $i\in\{0,\dots,N\}$
and all $\eps\in\R$. The dynamics of $X^{\eps}(\cdot,\ts[i])$ is described by perturbing
the SDE of the log-\lib rates by the perturbation parameter $\eps\in\R$:
\begin{align}\label{X-SDE}
\ud X^{\eps}(t,\ts[i])
 &= \eps \Big( b(t,\ts[i];X^\eps(t))\dt + \lambda(t,T_i)\ud H_t \Big),
\end{align}
where the drift term $b(\cdot,\ts[i];X^\eps(\cdot))$ is given by \eqref{LIBOR-drift-PT}.
The term $X^\eps(\cdot)$ in $b(\cdot,\ts[i];X^\eps(\cdot))$ emphasizes that the
drift term depends on all subsequent processes
$X^{\eps}(\cdot,\ts[i+1]),\dots,X^{\eps}(\cdot,\ts[N])$, which are also perturbed
by $\eps$. Note that for $\eps=1$ the processes $X^1(\cdot,T_i)$ and $Z(\cdot,T_i)$
are \textit{indistinguishable}.

\begin{remark}
In the sequel we will use the notation $\mathbf{T}$ as shorthand for
$\mathbf{T}^1$.
\end{remark}

\begin{lemma}\label{strong-Taylor}
The \emph{first-order strong Taylor approximation} of the random variable
$X^{\eps}(t,\ts[i])$ is given by:
\begin{equation}\label{TX}
\mathbf{T}\big( X^{\eps}(t,\ts[i])\big)
 = \log L(0,\ts[i])+\eps\frac{\partial}{\partial\eps}\big|_{\eps=0}X^{\eps}(t,\ts[i]),
\end{equation}
where the first variation process
$\frac{\partial}{\partial\eps}|_{\eps=0}X^{\eps}(\cdot,\ts[i])=:Y(\cdot,\ts[i])$
of $X^{\eps}(\cdot,\ts[i])$ is a \emph{\tih \lev process} with local characteristics
\begin{align}\label{FV-triplet}
b^{Y_i}_s &= b(s,\ts[i];X(0)) \nonumber\\
c^{Y_i}_s &= \lambda^2(s,T_i)c_s \\
\int 1_A(x)F^{Y_i}_s(\dx)
          &= \int 1_A\big(\lambda(s,T_i) x\big)F_s^{T_*}(\dx),
  \quad A\in\mathcal{B}(\R). \nonumber
\end{align}
\end{lemma}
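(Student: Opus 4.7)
The plan is to exploit the simple structure of the perturbed SDE \eqref{X-SDE}: the overall factor $\epsilon$ in front of both the drift and the martingale term will cause all chain-rule corrections to vanish at $\epsilon=0$, leaving a purely linear expression for the first variation process.

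First I would dispose of the zero-order term. Setting $\epsilon = 0$ in \eqref{X-SDE} gives $\ud X^0(t,T_i) = 0$, so $X^0(t,T_i) \equiv X^0(0,T_i) = \log L(0,T_i)$ for every $t$. By Definition \ref{taylor}, $\mathbf{T}\bigl(X^\epsilon(t,T_i)\bigr) = X^0(t,T_i) + \epsilon\,\partial_\epsilon|_{\epsilon=0}X^\epsilon(t,T_i)$, which already yields \eqref{TX} modulo identification of $Y(\cdot,T_i)$.

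Next I would compute $Y(\cdot,T_i)$ by differentiating the integral form
\begin{align*}
X^\epsilon(t,T_i)
 = \log L(0,T_i) + \epsilon \int_0^t b\bigl(s,T_i;X^\epsilon(s)\bigr)\ud s
  + \epsilon \int_0^t \lambda(s,T_i)\ud H_s
\end{align*}
in $\epsilon$ at $\epsilon=0$. The dependence of $b(s,T_i;X^\epsilon(s))$ on the processes $X^\epsilon(\cdot,T_l)$, $l>i$, produces chain-rule terms of order $\epsilon \cdot (\text{derivative})$, which vanish when evaluated at $\epsilon=0$ thanks to the leading $\epsilon$. Hence
\begin{align*}
Y(t,T_i) = \int_0^t b\bigl(s,T_i;X(0)\bigr)\ud s + \int_0^t \lambda(s,T_i)\ud H_s,
\end{align*}
with $b(\,\cdot\,;X(0))$ the drift \eqref{LIBOR-drift-PT} evaluated at the deterministic initial LIBOR rates $L(0,T_l)$, $l>i$. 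To make this rigorous I would proceed by backward induction on $i$ starting at $i=N$ (where $b(s,T_N;\cdot)$ does not depend on any $L(\cdot,T_l)$): at each step, the assumption \LL on $\lambda$, boundedness of the map $x\mapsto \delta_l x/(1+\delta_l x)$, together with Assumption \EM (which supplies exponential moments for the stochastic integrals), yield an $L^1$-Lipschitz estimate of the form
\begin{align*}
\E\bigl|b(s,T_i;X^\epsilon(s)) - b(s,T_i;X(0))\bigr|
 \le C\sum_{l>i} \E\bigl|X^\epsilon(s,T_l) - X^0(s,T_l)\bigr|,
\end{align*}
which is $O(\epsilon)$ by the inductive hypothesis. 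Multiplying by the prefactor $\epsilon$ and integrating in $s$ then gives the required $o(\epsilon)$-convergence of Definition \ref{taylor}.

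Finally I would identify the characteristics \eqref{FV-triplet}. Because $b(s,T_i;X(0))$ is a deterministic function of $s$ and $\lambda(s,T_i)$ is deterministic, $Y(\cdot,T_i)$ is the sum of a deterministic (absolutely continuous) drift and the stochastic integral of a deterministic integrand against the \tih \lev process $H$; it is therefore itself a \tih \lev process. Its local characteristics are read off directly: the drift coefficient is $b(s,T_i;X(0))$, the continuous part satisfies $\langle Y^c(\cdot,T_i)\rangle_\cdot = \int_0^\cdot \lambda^2(s,T_i) c_s\,\ud s$, and since $\Delta Y(s,T_i) = \lambda(s,T_i)\Delta H_s$ the jump measure is the pushforward of $F_s^{T_*}$ under $x\mapsto \lambda(s,T_i)x$, exactly as stated in \eqref{FV-triplet}. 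The main obstacle is really only the second step, namely establishing the inductive $L^1$-Lipschitz bound that controls the remainder uniformly in $s$; everything else is a direct computation with the canonical decomposition \eqref{canonical}.
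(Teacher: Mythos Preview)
Your proposal is correct and follows essentially the same route as the paper: both arguments set $\eps=0$ to obtain the constant zero-order term, differentiate the perturbed SDE \eqref{X-SDE} to see that the chain-rule corrections vanish at $\eps=0$ (leaving the frozen-drift first variation process), and then read off the \lev characteristics \eqref{FV-triplet} from the deterministic integrand against $H$. The only difference is in justifying the $o(\eps)$ remainder: the paper handles this in one sentence by invoking smoothness of $\eps\mapsto X^\eps(t,T_i)$, membership in $L^2(\Omega)$ via Assumption \EM, and the general convenient-calculus machinery of Kriegl--Michor, whereas you sketch a concrete backward-inductive $L^1$-Lipschitz estimate exploiting the boundedness of $z\mapsto \delta_l e^z/(1+\delta_l e^z)$. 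Your argument is more self-contained and makes the model structure explicit; the paper's is terser but relies on an external reference.
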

\begin{proof}
By definition, the first-order strong Taylor approximation is given by the
truncated power series
\begin{equation}\label{TX-2}
\mathbf{T}\big( X^{\eps}(t,\ts[i])\big)
 = X^{0}(t,\ts[i])
 + \eps\frac{\partial}{\partial\eps}\big|_{\eps=0}X^{\eps}(t,\ts[i]).
\end{equation}
Since the curves $\eps\mapsto X^{\eps}(t,\ts[i])$ are smooth, and
$X^{\eps}(t,\ts[i])\in L^2(\Omega)$ by Assumption $(\mathbb{EM})$, we get that
strong Taylor approximations of arbitrary order can always be obtained, cf.
\citeN[Chapter 1]{KrieglMichor97}. In particular, for the first-order expansion
we have that
\begin{align}
\E\left[|X^{\eps}(t,\ts[i]) - \mathbf{T}\big( X^{\eps}(t,\ts[i])\big)|\right]
 = o(\epsilon).
\end{align}

The zero-order term of the Taylor expansion trivially satisfies
\begin{align*}
X^{0}(t,\ts[i])=X^0(0,\ts[i])
\text{ for all } t, \text{ since }
\ud X^{0}(t,\ts[i])=0.
\end{align*}
Of course, the initial values of the perturbed SDE coincide with the initial
values of the un-perturbed SDE, hence $X^0(0,T_i)=L(0,T_i)=:X(0,T_i)$.

The first variation process of $X^{\eps}(\cdot,\ts[i])$ with respect to $\eps$
is derived by differentiating \eqref{X-SDE}; hence, the dynamics is
\begin{align}\label{FV-SDE}
\ud\Big(\frac{\partial}{\partial\eps}\Big|_{\eps=0}X^{\eps}(t,\ts[i])\Big)
 &= b(t,\ts[i];X^\eps(t))|_{\eps=0}\dt + \lambda(t,T_i)\ud H_t \nonumber\\
 &= b(t,\ts[i];X(0))\dt + \lambda(t,T_i)\ud H_t.
\end{align}
We can immediately notice that in the drift term $b(\cdot,\ts[i];X(0))$ of the first
variation process, the random terms $X^{\eps}(t,\ts[i])$ are replaced by their
deterministic initial values $X(0,\ts[i])=Z(0,T_i)$.

Let us denote by $Y(\cdot,\ts[i])$ the first variation process of $X^{\eps}(\cdot,\ts[i])$.
The solution of the linear SDE \eqref{FV-SDE} describing the dynamics of the
first variation process yields
\begin{align}\label{FV-dyn}
Y(t,\ts[i])
 &= \int_0^t b(s,\ts[i];X(0))\ds + \int_0^t \lambda(s,T_i)\ud H_s.
 \end{align}
Since the drift term is deterministic and $H$ is a \tih \lev process we can
conclude that $Y(\cdot,\ts[i])$ is itself a \tih \lev process. The local characteristics
of $Y(\cdot,\ts[i])$ are described by \eqref{FV-triplet}.
\end{proof}

To summarize, by setting $\eps=1$ in Lemma \ref{strong-Taylor}, we have developed
the following approximation scheme for the logarithm of the random terms
$X^1(\cdot,T_i)=Z(\cdot,T_i)$ entering the drift:
\begin{align}\label{approx-log-LIBOR}
\mathbf{T}X(t,\ts[i])
 = \log L(0,\ts[i]) + \int_0^t b(s,\ts[i];X(0))\ds + \int_0^t \lambda(s,T_i)\ud H_s.
\end{align}
Comparing \eqref{approx-log-LIBOR} with \eqref{log-LIB-SIE} it becomes evident
that we are approximating the semimartingale $Z(\cdot,T_i)$ with the \tih \textit{\lev
process} $\mathbf{T}X(\cdot,\ts[i])$.

\begin{remark}
A consequence of this approximation scheme is that we can embed the ``frozen
drift'' approximation into our method. Indeed, the ``frozen drift'' approximation
is the \textit{zero-order} Taylor approximation, i.e.
$X^1(t,\ts[i]) \approx \log L(0,\ts[i])$.
The dynamics of LIBOR rates using this approximation will be denoted by
$\widehat{L}^0(\cdot,T_i)$.
\end{remark}

\subsection{Application to LIBOR models}

In this section, we will apply the strong Taylor approximation of the log-LIBOR
rates $Z(\cdot,T_i)$ by $\mathbf{T}X(\cdot,T_i)$ in order to derive a \emph{strong},
i.e. pathwise, approximation for the dynamics of log-\lib rates. That is, we
replace the random terms in the drift $b(\cdot,T_i;Z(\cdot))$ by the \lev process
$\mathbf{T}X(\cdot,T_i)$ instead of the semimartingale $Z(\cdot,T_i)$. Therefore,
the dynamics of the \textit{approximate} log-\lib rates are given by
\begin{align}
\widehat{Z}(t,T_i)
 &= Z(0,T_i) + \int_0^t b(s,\ts[i];\mathbf{T}X(s))\ds + \int_0^t \lambda(s,T_i)\ud H_s,
\end{align}
where the drift term is provided by
\begin{align}
b(s,\ts[i];\mathbf{T}X(s))
 & = -\half \vol2T c_s
     - c_s\volT \sum_{l=i+1}^{N}
          \frac{\delta_l \e^{\mathbf{T}X(s-,T_l)}}{1+\delta_l \e^{\mathbf{T}X(s-,T_l)}}\lambda(s,T_l)
     \nonumber\\&\,\,
     - \int_{\R}\left(\Big(\e^{\volT x}-1\Big)\prod_{l=i+1}^{N}\widehat\beta(s,x,T_l)
                      -\volT x\right)F_s^{T_*}(\ud x),
\end{align}
with
\begin{align}
\widehat\beta(t,x,T_l,)
 = \frac{\delta_l \exp\big(\mathbf{T}X(t-,T_l)\big)}
        {1+\delta_l \exp\big(\mathbf{T}X(t-,T_l)\big)}\Big(\e^{\lambda(t,T_l)x}-1\Big) +1.
\end{align}

The main advantage of the strong Taylor approximation is that the resulting SDE
for $\widehat{Z}(\cdot,T_i)$ can be simulated more easily than the equation for
$Z(\cdot,T_i)$. Indeed, looking at \eqref{log-LIB-SDE} and \eqref{LIBOR-drift-PT}
again, we can observe that each LIBOR rate $L(\cdot,T_i)$ depends on all subsequent
rates $L(\cdot,T_l)$, $i+1\le l\le N$. Hence, in order to simulate $L(\cdot,T_i)$,
we should start by simulating the furthest rate in the tenor and proceed iteratively
from the end. On the contrary, the dynamics of $\widehat{Z}(\cdot,T_i)$ depend
only on the \lev processes $\mathbf{T}X(\cdot,T_l)$, $i+1\le l\le N$, which are
independent of each other. Hence, we can use \textit{parallel computing} to
simulate all approximate \lib rates simultaneously. This significantly increases
the speed of the Monte Carlo simulations while, as the numerical example reveals,
the empirical performance is very satisfactory.

\begin{remark}
Let us point out that this method can be applied to any LIBOR model driven by a
general semimartingale. Indeed, the properties of \lev processes are not essential
in the proof of Lemma \ref{strong-Taylor} or in the construction of the LIBOR
model. If we start with a \lib model driven by a general semimartingale, then
the structure of this semimartingale will be ``transferred'' to the first variation
process, and hence also to the dynamics of the strong Taylor approximation.
\end{remark}

\section{Numerical illustration}
\label{numerics}

The aim of this section is to demonstrate the accuracy and efficiency of the
Taylor approximation scheme for the valuation of options in the \lev \lib model
compared to the ``frozen drift'' approximation. We will consider the pricing of
caps and swaptions, although many other interest rate derivatives can be
considered in this framework.

We revisit the numerical example in \citeN[pp. 76-83]{Kluge05}. That is, we
consider a tenor structure $T_0=0, T_1=\frac12, T_2=1 \dots, T_{10}=5=T_*$,
constant volatilities
\begin{align*}
  \lambda(\cdot,T_1)= 0.20 \qquad
  \lambda(\cdot,T_2)= 0.19 \qquad
  \lambda(\cdot,T_3)= 0.18 \\
  \lambda(\cdot,T_4)= 0.17 \qquad
  \lambda(\cdot,T_5)= 0.16 \qquad
  \lambda(\cdot,T_6)= 0.15 \\
  \lambda(\cdot,T_7)= 0.14 \qquad
  \lambda(\cdot,T_8)= 0.13 \qquad
  \lambda(\cdot,T_9)= 0.12
\end{align*}
and the discount factors (zero coupon bond prices) as quoted on February 19,
2002; cf. Table \ref{tab:zcbprices}. The tenor length is constant and denoted
by $\delta=\frac12$.

{\renewcommand{\arraystretch}{1.16}
\begin{table}
 \begin{center}
  \begin{minipage}{11cm}\small
   \begin{tabular}{c|ccccc}
\hline
$T$      & 0.5\,Y&1\,Y&1.5\,Y&2\,Y&2.5\,Y\\
$B(0,T)$ & 0.9833630& 0.9647388& 0.9435826& 0.9228903& 0.9006922\\
\hline
$T$      &3\,Y&3.5\,Y&4\,Y&4.5\,Y&5\,Y\\
$B(0,T)$ & 0.8790279& 0.8568412& 0.8352144& 0.8133497& 0.7920573\\
 \hline
    \end{tabular}~\\[1ex]
   \end{minipage}
  \caption{Euro zero coupon bond prices on February 19, 2002.}
  \label{tab:zcbprices}
 \end{center}
\end{table}
}

The driving \lev process $H$ is a normal inverse Gaussian (NIG) process with
parameters $\alpha=\bar\delta=1.5$ and $\mu=\beta=0$. We denote by $\mu^H$ the
random measure of jumps of $H$ and by $\nu\dtdx=F(\dx)\dt$ the
$\P_{T_*}$-compensator of $\mu^H$, where $F$ is the \lev measure of the NIG
process. The necessary conditions are satisfied because $M=\alpha$, hence
$\sum_{i=1}^{9}|\lambda(\cdot,T_i)|=1.44<\alpha$
and $\lambda(\cdot,T_i)<\frac{\alpha}{2}$, for all $i\in\{1,\dots,9\}$.

The NIG \lev process is a pure-jump \lev process and, for $\mu=0$, has the
canonical decomposition
\begin{align}
 H=\int_0^\cdot\int_\R x (\mu^H-\nu)\dsdx.
\end{align}
The cumulant generating function of the NIG distribution is
\begin{align}
\kappa(u)
 &= \bar\delta\alpha-\bar\delta\sqrt{\alpha^2-u^2},
\end{align}
for all $u\in\mathbb{C}$ with $|\Re u|\le\alpha$.

\subsection{Caplets}

The price of a caplet with strike $K$ maturing at time $T_i$, using the
relationship between the terminal and the forward measures cf. \eqref{TF-libor},
can be expressed as
\begin{align}\label{caplet}
\mathbb{C}_0(K,\ts[i])
 &= \delta B(0,\ts[i+1])\, \E_{\pt[i+1]}[(L(T_i,T_i)-K)^+] \nonumber\\
 &= \delta B(0,\ts[i+1])\,
    \E_{\pt[*]}\Big[\frac{\ud\MeaT}{\ud\P_{T_*}}\big|_{\F_{\ts[i]}}(L(T_i,T_i)-K)^+\Big] \nonumber\\
 &= \delta B(0,\ts[*])\,
    \E_{\pt[*]}\Big[\prod_{l=i+1}^{N}\big(1+\delta L(T_i,T_l)\big)(L(T_i,T_i)-K)^+\Big].
\end{align}
This equation will provide the actual prices of caplets corresponding
to simulating the full SDE for the LIBOR rates. In order to calculate the
first-order Taylor approximation prices for a caplet we have to replace
$L(\cdot,T_\cdot)$ in \eqref{caplet} with $\widehat{L}(\cdot,\ts[\cdot])$.
Similarly, for the frozen drift approximation prices we must use
$\widehat L^0(\cdot,\ts[\cdot])$ instead of $L(\cdot,T_\cdot)$.

We will compare the performance of the strong Taylor approximation relative to
the frozen drift approximation in terms of their implied volatilities. In Figure
\ref{fig:caplets-IV-diffs} we present the difference in implied volatility between
the full SDE prices and the frozen drift prices, and between the full SDE prices
and the strong Taylor prices. One can immediately observe that the strong Taylor
approximation method performs
much better than the frozen drift approximation; the difference in implied
volatilities is very low across all strikes and maturities. Indeed, the
difference in implied volatility between the full SDE and the strong Taylor
prices lies always below the $1\%$ threshold, which deems this approximation
accurate enough for practical implementations. On the contrary, the difference
in implied volatilities for the frozen drift approximation exceeds the $1\%$
level for in-the-money options.

\begin{figure}
  \centering
  \includegraphics[width=6.25cm]{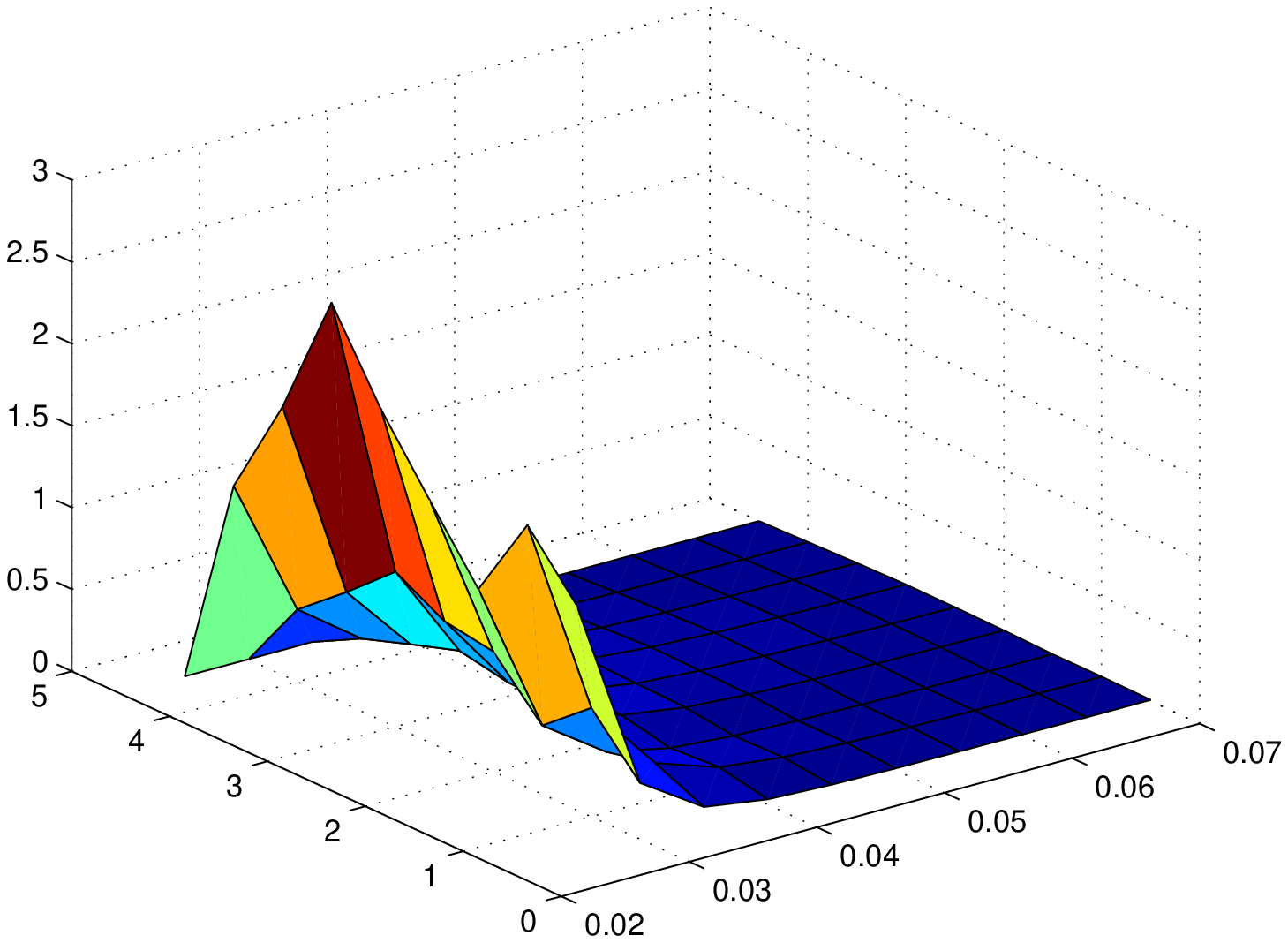}
  \includegraphics[width=6.25cm]{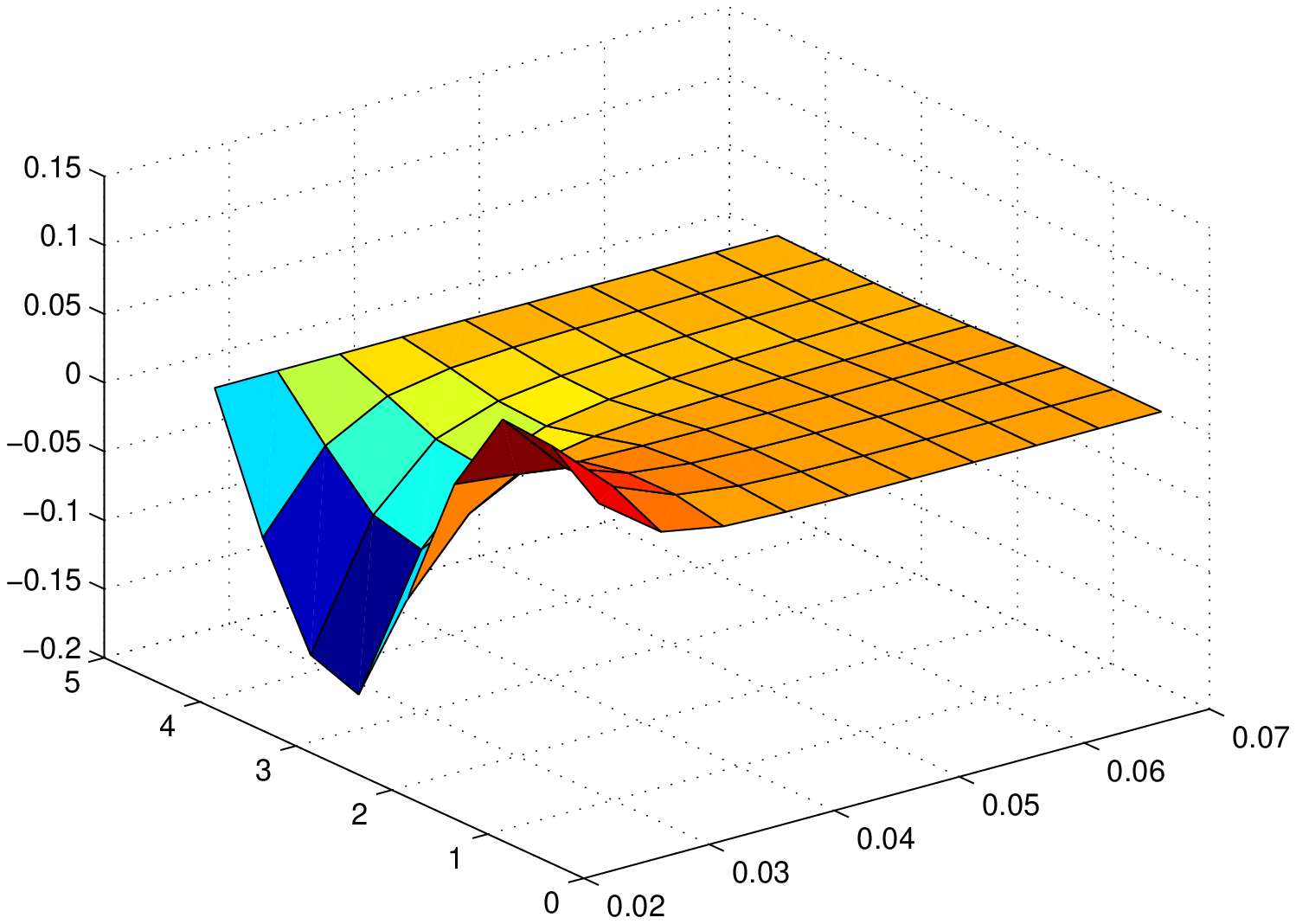}
  \caption{Difference in implied volatility between the full SDE and the frozen
           drift prices (left), and the full SDE and the strong Taylor prices (right).}
  \label{fig:caplets-IV-diffs}
\end{figure}

\subsection{Swaptions}

Next, we will consider the pricing of swaptions. Recall that a payer
(resp. receiver) swaption can be viewed as a put (resp. call) option
on a coupon bond with exercise price 1; cf. section 16.2.3 and 16.3.2
in \citeN{MusielaRutkowski97}. Consider a payer swaption with strike
rate $K$, where the underlying swap starts at time $T_i$ and matures
at $T_m$ ($i<m\le N$). The time-$T_i$ value is
\begin{align}
\mathbb{S}_{T_i}(K,T_i,T_m)
 &= \left( 1-\sum^m_{k=i+1} c_k B(T_i,T_k)\right)^+ \nonumber\\
 &= \left( 1-\sum^m_{k=i+1} \bigg(c_k \prod_{l=i}^{k-1}\frac{1}{1+\delta L(T_i,T_l)}\bigg)\right)^+,
\end{align}
where
\begin{align}
c_k = \left\{
        \begin{array}{ll}
          K, & \hbox{$i+1\le k\le m-1$,} \\
          1+K, & \hbox{$k=m$.}
        \end{array}
      \right.
\end{align}
Then, the time-0 value of the swaption is obtained by taking the
$\pt[i]$-expectation of its time-$T_i$ value, that is
\begin{align}
\mathbb{S}_{0}
 &= \mathbb{S}_{0}(K,T_i,T_m) \nonumber\\
 &= B(0,T_i)\,
    \E_{\pt[i]}\left[\left( 1-\sum^m_{k=i+1}
       \bigg(c_k \prod_{l=i}^{k-1}\frac{1}{1+\delta L(T_i,T_l)}\bigg)\right)^+\right] \nonumber\\
 &= B(0,T_*)\,\nonumber\\
 &\quad\times    \E_{\pt[*]}\left[\prod_{l=i}^{N}\big(1+\delta L(T_i,T_l)\big)
               \left( 1-\sum^m_{k=i+1}
               \bigg(c_k \prod_{l=i}^{k-1}\frac{1}{1+\delta L(T_i,T_l)}\bigg)\right)^+\right], \nonumber
\end{align}
hence
\begin{align}\label{swap-1}
\mathbb{S}_{0} &= B(0,T_*)\,
    \E_{\pt[*]}\left[\left( - \sum^m_{k=i}
               \bigg(c_k \prod_{l=k}^{N}\left(1+\delta L(T_i,T_l)\right)\bigg)\right)^+\right],
\end{align}
where $c_i:=-1$. Once again, this equation will provide the actual prices of
swaptions corresponding to simulating the full SDE for the LIBOR rates. In order
to calculate the first-order Taylor approximation prices we have to replace $L(\cdot,T_\cdot)$
with $\widehat{L}(\cdot,\ts[\cdot])$, and for the frozen drift
approximation prices we must use
$\widehat L^0(\cdot,\ts[\cdot])$ instead of $L(\cdot,T_\cdot)$.

We will price eight swaptions in our tenor structure; we consider 1 year and 2
years as option maturities, and then use 12, 18, 24 and 30 months as swap
maturities for each option. Similarly to the simulations we performed for caplets,
we will simulate the prices of swaptions using all three methods and compare their
differences; these can be seen in Figure \ref{fig:swap-price-diff}.
Once again we observe that the strong Taylor method is performing
very well across all strikes, option maturities and swap maturities, while the
performance of the frozen drift method is poor for in-the-money swaptions and
seems to be deteriorating for longer swap maturities. This observation is in
accordance with the common knowledge that the frozen drift approximation is
performing worse and worse for longer maturities.

\begin{figure}
  \centering
  \includegraphics[width=6.25cm]{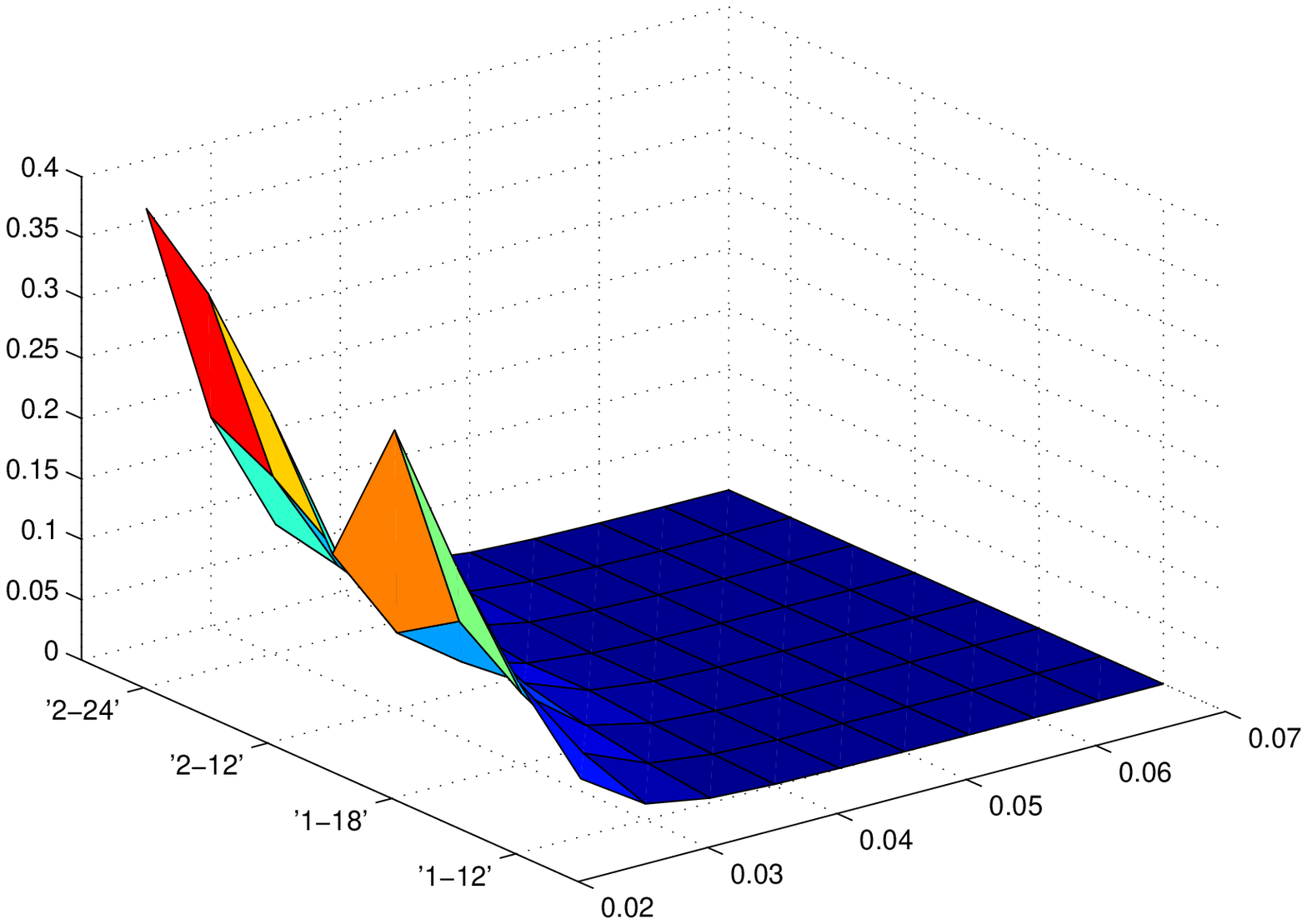}
  \includegraphics[width=6.25cm]{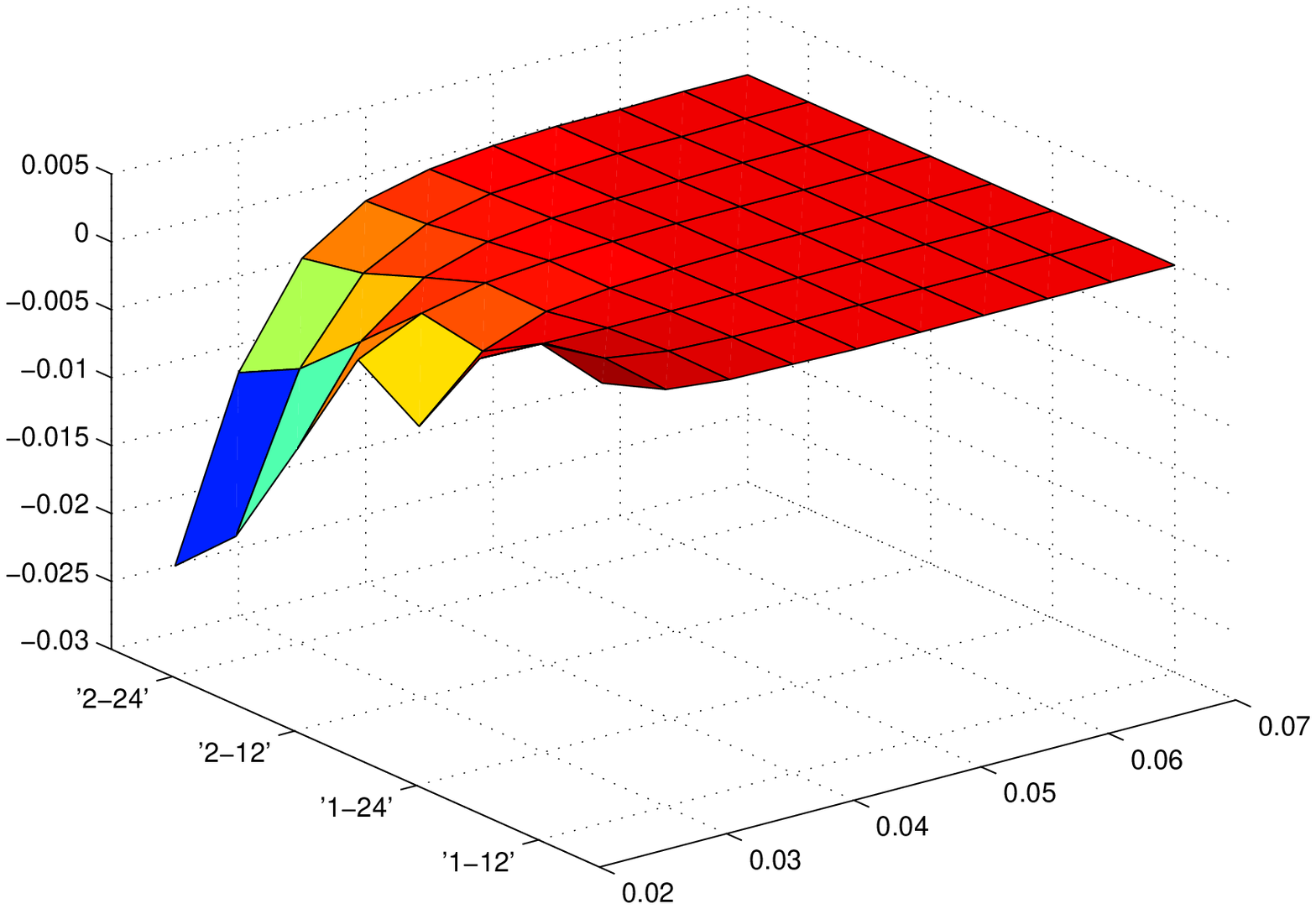}
  \caption{Difference in swaption prices between the full SDE and the frozen
           drift method (left), and the full SDE and the strong Taylor method (right).}
  \label{fig:swap-price-diff}
\end{figure}

\bibliographystyle{chicago}
\bibliography{references}

\begin{thebibliography}{}

\bibitem[\protect\citeauthoryear{Andersen and Brotherton-Ratcliffe}{Andersen
  and Brotherton-Ratcliffe}{2005}]{AndersenBrothertonRatcliffe05}
Andersen, L. and R.~Brotherton-Ratcliffe (2005).
\newblock Extended {LIBOR} market models with stochastic volatility.
\newblock {\em J. Comput. Finance\/}~{\em 9}, 1--40.

\bibitem[\protect\citeauthoryear{Black}{Black}{1976}]{Black76}
Black, F. (1976).
\newblock The pricing of commodity contracts.
\newblock {\em J. Financ. Econ.\/}~{\em 3}, 167--179.

\bibitem[\protect\citeauthoryear{Brace, Dun, and Barton}{Brace
  et~al.}{2001}]{BraceDunBarton01}
Brace, A., T.~Dun, and G.~Barton (2001).
\newblock Towards a central interest rate model.
\newblock In E.~Jouini, J.~Cvitani{\'c}, and M.~Musiela (Eds.), {\em Option
  pricing, interest rates and risk management}, pp.\  278--313. Cambridge
  University Press.

\bibitem[\protect\citeauthoryear{Brace, G\c{a}tarek, and Musiela}{Brace
  et~al.}{1997}]{BraceGatarekMusiela97}
Brace, A., D.~G\c{a}tarek, and M.~Musiela (1997).
\newblock The market model of interest rate dynamics.
\newblock {\em Math. Finance\/}~{\em 7}, 127--155.

\bibitem[\protect\citeauthoryear{Daniluk and G\c{a}tarek}{Daniluk and
  G\c{a}tarek}{2005}]{DanilukGatarek05}
Daniluk, A. and D.~G\c{a}tarek (2005).
\newblock A fully log-normal {LIBOR} market model.
\newblock {\em Risk\/}~{\em 18\/}(9), 115--118.

\bibitem[\protect\citeauthoryear{Dun, Barton, and Schl{\"o}gl}{Dun
  et~al.}{2001}]{DunBartonSchloegl01}
Dun, T., G.~Barton, and E.~Schl{\"o}gl (2001).
\newblock Simulated swaption delta-hedging in the lognormal forward {LIBOR}
  model.
\newblock {\em Int. J. Theor. Appl. Finance\/}~{\em 4}, 677--709.

\bibitem[\protect\citeauthoryear{Eberlein, Jacod, and Raible}{Eberlein
  et~al.}{2005}]{EberleinJacodRaible05}
Eberlein, E., J.~Jacod, and S.~Raible (2005).
\newblock {L\'{e}vy term structure models: no-arbitrage and completeness}.
\newblock {\em Finance Stoch.\/}~{\em 9}, 67--88.

\bibitem[\protect\citeauthoryear{Eberlein and \"Ozkan}{Eberlein and
  \"Ozkan}{2005}]{EberleinOezkan05}
Eberlein, E. and F.~\"Ozkan (2005).
\newblock {The L\'evy LIBOR model}.
\newblock {\em Finance Stoch.\/}~{\em 9}, 327--348.

\bibitem[\protect\citeauthoryear{Glasserman and Kou}{Glasserman and
  Kou}{2003}]{GlassermanKou03}
Glasserman, P. and S.~G. Kou (2003).
\newblock The term structure of simple forward rates with jump risk.
\newblock {\em Math. Finance\/}~{\em 13}, 383--410.

\bibitem[\protect\citeauthoryear{Glasserman and Merener}{Glasserman and
  Merener}{2003a}]{GlassermanMerener03b}
Glasserman, P. and N.~Merener (2003a).
\newblock {Cap and swaption approximations in LIBOR market models with jumps}.
\newblock {\em J. Comput. Finance\/}~{\em 7}, 1--36.

\bibitem[\protect\citeauthoryear{Glasserman and Merener}{Glasserman and
  Merener}{2003b}]{GlassermanMerener03}
Glasserman, P. and N.~Merener (2003b).
\newblock {Numerical solution of jump-diffusion LIBOR market models}.
\newblock {\em Finance Stoch.\/}~{\em 7}, 1--27.

\bibitem[\protect\citeauthoryear{Glasserman and Zhao}{Glasserman and
  Zhao}{2000}]{GlassermanZhao00}
Glasserman, P. and X.~Zhao (2000).
\newblock Arbitrage-free discretization of lognormal forward {LIBOR} and swap
  rate models.
\newblock {\em Finance Stoch.\/}~{\em 4}, 35--68.

\bibitem[\protect\citeauthoryear{Hunter, J\"ackel, and Joshi}{Hunter
  et~al.}{2001}]{HunterJaeckelJoshi01}
Hunter, C., P.~J\"ackel, and M.~Joshi (2001).
\newblock Getting the drift.
\newblock {\em Risk\/}~{\em 14}, 81--84.

\bibitem[\protect\citeauthoryear{Jacod and Shiryaev}{Jacod and
  Shiryaev}{2003}]{JacodShiryaev03}
Jacod, J. and A.~N. Shiryaev (2003).
\newblock {\em Limit Theorems for Stochastic Processes\/} (2nd ed.).
\newblock Springer.

\bibitem[\protect\citeauthoryear{Jamshidian}{Jamshidian}{1999}]{Jamshidian99}
Jamshidian, F. (1999).
\newblock {LIBOR} market model with semimartingales.
\newblock Working Paper, NetAnalytic Ltd.

\bibitem[\protect\citeauthoryear{Joshi and Stacey}{Joshi and
  Stacey}{2008}]{JoshiStacey08}
Joshi, M. and A.~Stacey (2008).
\newblock New and robust drift approximations for the {LIBOR} market model.
\newblock {\em Quant. Finance\/}~{\em 8}, 427--434.

\bibitem[\protect\citeauthoryear{Kluge}{Kluge}{2005}]{Kluge05}
Kluge, W. (2005).
\newblock {\em {Time-inhomogeneous L\'evy processes in interest rate and credit
  risk models}}.
\newblock Ph.\ D. thesis, Univ. Freiburg.

\bibitem[\protect\citeauthoryear{Kriegl and Michor}{Kriegl and
  Michor}{1997}]{KrieglMichor97}
Kriegl, A. and P.~W. Michor (1997).
\newblock {\em The Convenient Setting of Global Analysis}.
\newblock American Mathematical Society.

\bibitem[\protect\citeauthoryear{Kurbanmuradov, Sabelfeld, and
  Schoenmakers}{Kurbanmuradov
  et~al.}{2002}]{KurbanmuradovSabelfeldSchoenmakers}
Kurbanmuradov, O., K.~Sabelfeld, and J.~Schoenmakers (2002).
\newblock Lognormal approximations to {LIBOR} market models.
\newblock {\em J. Comput. Finance\/}~{\em 6}, 69--100.

\bibitem[\protect\citeauthoryear{Miltersen, Sandmann, and Sondermann}{Miltersen
  et~al.}{1997}]{MiltersenSandmannSondermann97}
Miltersen, K.~R., K.~Sandmann, and D.~Sondermann (1997).
\newblock Closed form solutions for term structure derivatives with log-normal
  interest rates.
\newblock {\em J. Finance\/}~{\em 52}, 409--430.

\bibitem[\protect\citeauthoryear{Musiela and Rutkowski}{Musiela and
  Rutkowski}{1997}]{MusielaRutkowski97}
Musiela, M. and M.~Rutkowski (1997).
\newblock {\em Martingale Methods in Financial Modelling}.
\newblock Springer.

\bibitem[\protect\citeauthoryear{Papapantoleon}{Papapantoleon}{2007}]{Papapant%
oleon06}
Papapantoleon, A. (2007).
\newblock {\em Applications of semimartingales and {L}\'evy processes in
  finance: duality and valuation}.
\newblock Ph.\ D. thesis, Univ. Freiburg.

\bibitem[\protect\citeauthoryear{Pelsser, Pietersz, and van
  Regenmortel}{Pelsser et~al.}{2005}]{PelsserPieterszRegenmortel}
Pelsser, A., R.~Pietersz, and M.~van Regenmortel (2005).
\newblock Bridging {B}rownian {LIBOR}.
\newblock {\em Wilmott Mag.\/}~{\em 18}, 98--103.

\bibitem[\protect\citeauthoryear{Sandmann, Sondermann, and Miltersen}{Sandmann
  et~al.}{1995}]{SandmannSondermannMiltersen95}
Sandmann, K., D.~Sondermann, and K.~R. Miltersen (1995).
\newblock Closed form term structure derivatives in a {Heath--Jarrow--Morton}
  model with log-normal annually compounded interest rates.
\newblock In {\em Proceedings of the Seventh Annual European Futures Research
  Symposium Bonn}, pp.\  145--165.
\newblock Chicago Board of Trade.

\bibitem[\protect\citeauthoryear{Schl\"ogl}{Schl\"ogl}{2002}]{Schloegl02}
Schl\"ogl, E. (2002).
\newblock A multicurrency extension of the lognormal interest rate market
  models.
\newblock {\em Finance Stoch.\/}~{\em 6}, 173--196.

\bibitem[\protect\citeauthoryear{Schoenmakers}{Schoenmakers}{2005}]{Schoenmake%
rs05}
Schoenmakers, J. (2005).
\newblock {\em Robust {LIBOR} Modelling and Pricing of Derivative Products}.
\newblock Chapman \& Hall/CRC.

\bibitem[\protect\citeauthoryear{Siopacha}{Siopacha}{2006}]{Siopacha06}
Siopacha, M. (2006).
\newblock {\em {Taylor expansions of option prices by means of Malliavin
  calculus}}.
\newblock Ph.\ D. thesis, Vienna University of Technology.

\bibitem[\protect\citeauthoryear{Siopacha and Teichmann}{Siopacha and
  Teichmann}{2010}]{SiopachaTeichmann07}
Siopacha, M. and J.~Teichmann (2010).
\newblock {Weak and strong Taylor methods for numerical solutions of stochastic
  differential equations}.
\newblock {\em Quant. Finance\/}.
\newblock (forthcoming).

\end{thebibliography}

\end{document}